\documentclass[a4paper,12pt]{amsart}

\pdfoutput=1
\usepackage{setspace}
\usepackage[left=22mm,head=30mm,bottom=30mm,foot=20mm,right=22mm]{geometry}
\usepackage{amsmath,amsthm,amsfonts,amssymb,mathtools,mathrsfs,url,bm,enumitem,stackengine}
\usepackage{newtxtext}
\usepackage[varvw]{newtxmath}
\allowdisplaybreaks
\usepackage{hyperref}
\hypersetup{
	colorlinks=true,
	linkcolor=blue,
	citecolor=red,%magenta,
}
\usepackage{appendix}

\mathtoolsset{showonlyrefs=true}
\theoremstyle{definition}

\newtheorem{remark}{Remark}[section]

\newtheorem{problem}{Problem}[section]
\newtheorem{assumption}{Assumption}[section]

\theoremstyle{plain}

\newtheorem{theorem}{Theorem}[section]

\newtheorem{corollary}{Corollary}[section]

\newtheorem{lemma}{Lemma}[section]
\def\R{\mathbb{R}}

\renewcommand{\phi}{\varphi}
\newcommand\restr[2]{{% we make the whole thing an ordinary symbol
		\left.\kern-\nulldelimiterspace % automatically resize the bar with \right
		#1 % the function
		\vphantom{\big|} % pretend it's a little taller at normal size
		\right|_{#2} % this is the delimiter
}}

\let\phi\varphi
\let\epsilon\varepsilon

\usepackage{tikz-cd}
\tikzset{
	symbol/.style={
		draw=none,
		every to/.append style={
			edge node={node [sloped, allow upside down, auto=false]{$#1$}}}
	}
}

\def\R{\mathbb R}

\def\proscal3#1#2{<\!\!#1, #2\!\!>_{_{{\hskip-4pt\R^{3}}}}}

\def\vu{ u }

\def\l2{L^2(\R^{n})}
\def\L2{L^2(\R^{2n})}

\def\mat22#1#2#3#4{\begin{pmatrix}#1&#2\\ #3&#4\end{pmatrix}}

\begin{document}

	\title[Remarks on a Liouville-type theorem by Chae and Wolf for SNS]{Remarks on a Liouville-type theorem by Chae and Wolf for stationary Navier-Stokes equations}
	
	\author{Raúl Fierro}
\address{Instituto de Matemáticas, Universidad de Valparaíso, Valparaíso, Chile.}
\email{raul.fierro@uv.cl}	

	\author{Gast\'{o}n Vergara-Hermosilla}
	\address{School of Mathematics, Harbin Institute of Technology, Xidazhi Street, Harbin 150001,
Heilongjiang, People’s Republic of China}
\email{gaston.v-h@outlook.com (corresponding author)}

	\maketitle
	
\begin{abstract}
In this note, we revisit a Liouville-type theorem of Chae and Wolf for stationary Navier--Stokes equations in $\mathbb{R}^3$ [J. Differential Equations 261 (2016) 5541–5560]. We show that their logarithmic improvement of the classical $L^{9/2}$ condition is part of a substantially broader weighted framework. More precisely, we prove that a solution $u\in\dot H^1(\mathbb{R}^3)$ is necessarily trivial whenever
\[
\int_{\mathbb{R}^3}|u(x)|^{9/2}\,\omega(|u(x)|)\,dx<
+\infty,
\]
for every positive, nondecreasing and bounded weight $\omega$ satisfying a mild  growth condition near the origin. 
This structural condition encompasses the logarithmic weight due to Chae and Wolf, as well as a hierarchy of iterated-logarithmic weights and (genuinely) non-logarithmic examples,  including a dyadic weight.
 Our result identifies a broader class of weighted integrability conditions under which the triviality of stationary Navier--Stokes solutions follows, and shows that the mechanism underlying the Chae--Wolf improvement is not intrinsically tied 
 to a specific logarithmic weight or to a single logarithmic scale.
 
\end{abstract}

\medskip

\textit{Keywords}: Stationary  Navier-Stokes equations; Liouville-type theorems; Weights.

\medskip
	
\textit{Mathematics Subject Classification (2020)}: 35Q30, 35B53, 76D05.

\tableofcontents
%---------------------------------------------------
 \section{Introduction and main results}
 
 	In this note  we study the stationary Navier--Stokes equations in the whole space  $\mathbb{R}^3$:
\begin{equation}\label{SNS}
-\Delta u + u \cdot \nabla u + \nabla P = 0, \quad 
\nabla \cdot u = 0,
\end{equation}
where $u:\R^3 \to \R^3$ denotes the velocity field and $P:\R^3 \to \R$ stands for the associated pressure. 
It is well known that solutions $(u,P)$ to \eqref{SNS} can be constructed in the spaces $ \dot{H}^1(\mathbb{R}^3) \times \dot{H}^{1/2}(\mathbb{R}^3)$ (see  \cite[Theorem 16.2]{lemarie2016navier}). However, uniqueness in this class remains an open (and difficult) question. 
This motivates the following problem, originally raised in \cite[Remark X.9.4]{galdi2011introduction} and \cite{Ser2016}.

\begin{problem}\label{Conjecture1}
Prove that any solution $u$ of \eqref{SNS} satisfying
\begin{equation}\label{Conjecture}
u \in \dot{H}^1(\mathbb{R}^3)
\qquad \text{and} \qquad
\lim_{ |x| \to +\infty}
u(x) = 0 ,
\end{equation}
is identically equal to zero.
\end{problem}
We now briefly summarize some of the key advances made on this problem. The Sobolev embedding theorem guarantees that every $u \in \dot{H}^1(\mathbb{R}^3)$ lies in $L^6(\mathbb{R}^3)$, which already provides a certain rate of decay at infinity. Nevertheless, this fact by itself does not appear to be enough to conclude that the solution must vanish. A number of partial results addressing Problem \ref{Conjecture1} have accumulated over time, each fo them show extra structural hypotheses forcing $u \equiv 0$. 
One of the first contributions in this direction is due to G.~Galdi \cite{galdi2011introduction}, who established that the condition $u \in L^{9/2}(\mathbb{R}^3)$ is sufficient to guarantee $u \equiv 0$. In what follows, we will refer to this result as Galdi's condition.
 Recently, N.~Lerner \cite{Lerner26} observed that this global $L^{9/2}$ hypothesis can be weakened by separating the low- and high-frequency parts of $u$. He in fact demonstrated   that it suffices to assume $u_{[0]} \in L^{9/2}(\mathbb{R}^3)$, where $u_{[0]}$ denotes the component of $u$ obtained by projecting onto vector fields whose Fourier transform is supported in a neighborhood of the origin. 
More recently, the second author of this paper showed \cite{V2026} that the uniqueness of the trivial solution follows if $u \in L^{9/2 + \varepsilon(\cdot)}(\mathbb{R}^3)$, where $\varepsilon(\cdot)>0$. 
Following a different strategy, H. Kozono, Y. Terasawa, and Y. Wakasugi  established in \cite{Kozonoetal} that whenever the weak-$L^{9/2}$ norm of $u$ obeys the bound
\[
\|u\|_{L^{9/2,\infty}} \leq \delta \bigl( \nu \| \operatorname{curl} u \|_{L^2}^2 \bigr)^{1/3},
\]
with $\delta$ small enough,  in fact  yield that $u \equiv 0$. On the other hand,  G. Seregin and W. Wang  subsequently generalized this result in \cite{Sereginetwang}. 
A further idea to attack the problem imposes conditions directly on the Laplacian of $u$. D.~Chae \cite{chae14} proved that $\Delta u \in L^{6/5}(\mathbb{R}^3)$ in fact implies that $u \equiv 0$.  In addition to these strategias, has been stressed that some structural hypotheses provide uniqueness of the trivial solution. In \cite{Seregin16} G. Seregin  demonstrated that if $u$ can be written as $u = \operatorname{curl} w$ for some $w \in \mathrm{BMO}(\mathbb{R}^3)$, then $u$ necessarily vanishes.\\

Notably, Galdi's condition was later relaxed by D.~Chae and J.~Wolf \cite{ChaeWolf}, who proved that the weaker assumption
\begin{equation}\label{chae-wolf.assumption}
\int_{\mathbb{R}^3} |u(x)|^{9/2} \bigl[\log(2 + |u(x)|^{-1})\bigr]^{-1} \, dx < +\infty
\end{equation} 
still guarantees $u \equiv 0$, providing a logarithmic improvement of Galdi’s result. 
Indeed, since  $ \bigl[\log(2 + |u(x)|^{-1})\bigr]^{-1} $ is bounded above  by $ \bigl[\log(2  )\bigr]^{-1} $, we can write   
\[
\int_{\mathbb{R}^3} |u(x)|^{9/2} \bigl[\log(2 + |u(x)|^{-1})\bigr]^{-1} dx  
\lesssim 
\int_{\mathbb{R}^3} |u(x)|^{9/2}   dx
. 
\]
Consequently, the assumption $u \in L^{9/2}(\mathbb{R}^3)$ implies \eqref{chae-wolf.assumption}. The converse, however, does not hold.\\

Motivated by the above result, in this note we seek to place the Chae--Wolf theorem within a more general framework of weighted integrability conditions leading to Liouville-type theorems for the stationary Navier--Stokes equations in $\mathbb{R}^3$. More precisely, we replace the specific logarithmic weight considered by Chae and Wolf with a general admissible weight and show that the resulting theorem yields, in particular, improvements of Galdi's (and Chae--Wolf's) results.\\

To this end, we begin by introducing the following assumptions. 

\begin{assumption}\label{ass:omega}
We assume that $\omega:(0, + \infty)\to(0, + \infty)$ is continuous and satisfies:
\begin{enumerate}
\item[(H1)] $\omega$ is nondecreasing,
\item[(H2)] $\omega$ is bounded: there exists $C_\omega>0$ such that $\omega(t)\le C_\omega$ for all $t>0$,
\item[(H3)] there exist constants $C>0$ and $t_0\in(0,1)$ such that
\[
\omega(t)^{-1} \le C\,\log(1/t) \qquad \text{for all } 0<t<t_0.
\]
\end{enumerate}
\end{assumption}
Under Assumption \ref{ass:omega}, we can now state our main theorem.
 
 %%%%%%%%%%%%%%%%%%%
\begin{theorem}\label{thm:main1} 
 Let $ v  \in \dot{H}^1(\mathbb{R}^3) $ be a solution of (\ref{SNS}) and $\omega$ be a weight function satisfying hypotheses (H1)--(H3) in Assumption  \ref{ass:omega}.  Suppose further that
 \[
 \int_{	\R^3	}
| v |^{\frac 9 2}
\omega(|v|)
dx
< + \infty .
 \]
 Then, $v $ is identically equal to zero.
\end{theorem}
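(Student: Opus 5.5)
The plan is to reduce Theorem \ref{thm:main1} to the Chae--Wolf theorem, i.e. to show that the weighted hypothesis implies \eqref{chae-wolf.assumption}:
\[
\int_{\R^3} |v|^{9/2}\bigl[\log(2+|v|^{-1})\bigr]^{-1}dx<+\infty .
\]
Triviality of $v$ then follows from \cite{ChaeWolf}, since $v\in\dot H^1(\R^3)$ solves \eqref{SNS}. To get this bound I compare the two weights pointwise. I split $\R^3$ into three regions: $A=\{|v|<t_0\}$, $B=\{t_0\le |v|\le 1\}$ and $D=\{|v|>1\}$, with $t_0\in(0,1)$ taken from (H3).

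On $A$, (H3) gives $\omega(|v|)^{-1}\le C\log(1/|v|)\le C\log(2+|v|^{-1})$. Hence
\[
\bigl[\log(2+|v|^{-1})\bigr]^{-1}\le C\,\omega(|v|),
\]
and the integral over $A$ is bounded by $C\int|v|^{9/2}\omega(|v|)\,dx<\infty$.

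On $B\cup D$, (H1) and positivity give $\omega(|v|)\ge\omega(t_0)>0$, while $\bigl[\log(2+|v|^{-1})\bigr]^{-1}\le[\log 2]^{-1}$. So on this set
\[
\bigl[\log(2+|v|^{-1})\bigr]^{-1}\le \frac{1}{\omega(t_0)\log 2}\,\omega(|v|),
\]
and the integral is again controlled by the hypothesis. Combining the three regions yields \eqref{chae-wolf.assumption}.

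Hypothesis (H2) is not needed for this reduction. It only guarantees that the weighted condition is genuinely weaker than $L^{9/2}$, so that the theorem extends Galdi's result.

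The main point to check is that the Chae--Wolf theorem, as stated in \cite{ChaeWolf}, applies to $\dot H^1$ solutions under \eqref{chae-wolf.assumption} alone. If they also require $v\to0$ at infinity, or that $v$ be smooth, this needs care: for $\dot H^1$ solutions, regularity is classical and the decay comes from $v\in L^6$ plus smoothness. If the direct citation were inadequate, I would instead reproduce their argument: test the equation against $v\,\phi_R$ with a cutoff and a truncation in $|v|$. The delicate term is $\int|v|^3|\nabla\phi_R|$ over annuli, which is estimated by Hölder with the weight through an Orlicz/Young-type inequality. In that argument only the behavior of the weight near $|v|\to0$ enters, and (H3) makes it comparable to theirs.
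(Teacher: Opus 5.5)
Your reduction is correct, and it takes a different and much shorter route than the paper. The pointwise bound you obtain,
\[
|v|^{9/2}\bigl[\log(2+|v|^{-1})\bigr]^{-1}\le \max\Bigl\{C,\ \frac{1}{\omega(t_0)\log 2}\Bigr\}\,|v|^{9/2}\,\omega(|v|)\qquad\text{on }\{v\neq 0\},
\]
uses only positivity of $\omega$, (H1) and (H3). So the hypothesis of Theorem~\ref{thm:main1} implies \eqref{chae-wolf.assumption}, and the Chae--Wolf theorem applies directly. Your concern about their standing hypotheses is handled as in the paper: smoothness follows from \cite[Theorem X.1.1]{galdi2011introduction}, and the paper also borrows its key energy estimate from \cite{ChaeWolf}.

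The paper instead re-runs the Chae--Wolf argument with a general weight, starting from the local energy inequality \eqref{estimate.base}:
\begin{itemize}
\item It controls $R^{-7/6}\int_{B_{2R}}|v|^3\,dx$ by showing $v\in L^q$ for some $q\in(9/2,54/11)$. This is where (H3) enters, via Remark~\ref{rkm17}.
\item It controls $R^{-1}\int_{A_R}|v|^3\,dx$ by Jensen's inequality for the convex function $\phi_{3/2}$ of Lemma~\ref{lem.phiq}. It splits $A_R$ according to whether $R^2|v|^3>\varepsilon$, and uses (H3) to get $\omega(|v|)^{-1}\lesssim\log R$ on the first set. The Chae--Wolf annulus lemma then makes $\log R\int_{A_R}|v|^{9/2}\omega(|v|)\,dx$ small along a sequence $R\to+\infty$.
\item Boundedness (H2) and continuity of $\omega$ are used along the way.
\end{itemize}

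What the paper's route buys is a display of where each hypothesis enters the energy argument. What yours buys is the observation that this detour is unnecessary. Together with positivity, (H1) and (H3) amount to $\omega(t)\gtrsim[\log(2+1/t)]^{-1}$ for all $t>0$. Hence the Chae--Wolf weight is, up to constants, the weakest admissible weight. Theorem~\ref{thm:main1}, and every example in Section~\ref{section.examples}, is therefore a corollary of \cite{ChaeWolf} rather than an extension of it.

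One minor correction to your aside on (H2). It ensures that $v\in L^{9/2}$ implies the weighted condition, not that the weighted condition is strictly weaker. For weights with $\lim_{t\to0^+}\omega(t)>0$ the two conditions are equivalent, as noted in Remark~\ref{rmk33}.
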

%%%%%%%%%%%%%%%%%%%

 Some remarks are in order here.
\begin{remark}\label{rmk11}
The class of admissible weights in Theorem~\ref{thm:main1} contains, in particular, the logarithmic family

$$
\omega_\beta(t)
=
\left[\log\left(2+\frac1t\right)\right]^{-\beta},
\qquad t>0,
\qquad \beta\in(0,1].
$$
Consequently, for every $\beta\in(0,1]$, we can (roughly) write 
$$
\int_{\mathbb R^3}
|v|^{9/2}
\left[\log\left(2+\frac1{|v|}\right)\right]^{-\beta}
\,dx
<
+ \infty
\quad\Longrightarrow\quad
v\equiv0.
$$
Note in particular that the choice $\beta=1$ recovers precisely the assumption considered  by Chae and Wolf \cite{ChaeWolf}. 
To continue, note that the scope of the theorem, however, is not restricted to powers of a single logarithm. Indeed, for a fixed integer $k\geq1$, we can consider the iterated-logarithmic (or nested logarithms) weight

$$
\omega_k(t)
=
\frac{1}{
\log\Bigl(
2+\log\bigl(
2+\cdots+\log(2+\tfrac1t)\cdots
\bigr)
\Bigr)
},
\qquad t>0,
$$
with $k$ nested logarithms. These weights provide a natural hierarchy of admissible corrections and further illustrate the flexibility of the framework. \\

On the other hand, we remark that the result of Chae and Wolf should be viewed as one distinguished instance within a substantially broader class of weighted integrability conditions. Theorem~\ref{thm:main1} is formulated in terms of the structural properties {\rm(H1)--(H3)}, rather than a prescribed logarithmic form, thereby allowing for a variety of asymptotic behaviors of the weight near the origin. 

The corresponding logarithmic and iterated-logarithmic examples are discussed in detail in Subsections \ref{subsec.4.1} and \ref{subsec.4.2}, respectively.
\end{remark}

\begin{remark}[A non-logarithmic example]\label{rmk22}
The scope of Theorem~\ref{thm:main1} can be further illustrated by considering following  dyadic weight\footnote{for the precise definition of $N(t)$ please see Subsection \ref{subsec.4.4}.}
$$
\omega_{\mathrm{dyadic}}(t)
:=
\begin{cases}
1,
& t\geq1,
\\[6pt]
\displaystyle
\frac1{n+2}
+
\left(
\frac1{n+1}-\frac1{n+2}
\right)
\frac{t-2^{-(n+1)}}{2^{-n}-2^{-(n+1)}},
&
t\in\bigl(2^{-(n+1)},2^{-n}\bigr],
\quad n=N(t).
\end{cases}
$$
In contrast to the examples mentioned above, this weight is constructed through a dyadic interpolation and is not defined in terms of logarithmic functions. Nevertheless, it belongs to the class covered by Theorem~\ref{thm:main1}, and hence yields another Liouville-type theorem, which roughly reads as 
$$
\int_{\mathbb R^3}
|v|^{9/2}\omega_{\mathrm{dyadic}}(|v|)
\,dx
<
+ \infty
\quad\Longrightarrow\quad
v\equiv0.
$$
This example allow us to conclude that the admissible class (given by assumption 1.1) contains weights arising from fundamentally different constructions and is therefore genuinely broader than the logarithmic scale. In this sense, Theorem~\ref{thm:main1} provides a structural framework for Liouville-type criteria rather than a single logarithmic refinement of the classical $L^{9/2}$ condition.

This example is discussed in detail in Subsections \ref{subsec.4.4}.
\end{remark}

 % remark 3
\begin{remark}\label{rmk33}
Note that, the behavior of the weight near the origin determines whether the weighted condition in Theorem~\ref{thm:main1} provides a genuine improvement over Galdi's condition. More precisely, if
\[
\lim_{t\to0^+}\omega(t)=0,
\]
then the weight reduces the contribution of the region where $|v|$ is small, and the corresponding weighted integrability condition can be strictly weaker than
$v\in L^{9/2}(\mathbb R^3)$. This is precisely the mechanism behind the logarithmic improvement of Chae and Wolf and, more generally, behind the examples considered in the preceding remarks.

In contrast, if we have
\[ 
\lim_{t\to0^+}\omega(t)=\ell>0
,
\]
then no such improvement is possible. In fact,  by (H2) we have $\ell\le\omega(t)\le C_\omega$ for every $t>0$, then, we can write 
\[
\ell\int_{\mathbb R^3}|v|^{9/2}\,dx
\;\le\;
\int_{\mathbb R^3}|v|^{9/2}\omega(|v|)\,dx
\;\le\;
C_\omega\int_{\mathbb R^3}|v|^{9/2}\,dx,
\]
and we conclude that the weighted integral is finite if and only if $v\in L^{9/2}(\mathbb R^3)$, {\it i.e.}
the class of vector fields covered by Theorem~\ref{thm:main1} coincides, up to a
 constant, with Galdi's class, and no (genuine)
  improvement occurs.
  
Now, it is important to highlight that the assumptions {\rm(H1)--(H3)} also admit weights that do not yield any improvement over the result of Galdi. For instance, consider the weights
$$
\omega_1(t):=2+\arctan(t),
\quad
\omega_2(t):=3-\frac{1}{1+t},
\quad
\omega_3(t) =
\begin{cases} 
2 - e^{-t}, & 0 < t \leq \frac{1}{2}, \\ 
2 - e^{-1/2}, & t > \frac{1}{2},
\end{cases}
\qquad t>0.
$$
These weights satisfy (H1)--(H3). However, is easy to check that
\[
\lim_{t\to0^+}\omega_j(t)>0,
\qquad j=1,2,3,
\]
whereas the weights considered in the preceding remarks satisfy
$\lim_{t\to0^+}\omega(t)=0$. 
These new examples are discussed in detail in Subsections \ref{subsec.4.3} and \ref{subsec.4new}.
\end{remark}

 % remark 4
\begin{remark}\label{rem.4}
One of the main building blocks in the proof of our main result is the estimate 
\[
\int_{B_{R/2}} |\nabla v|^2 \, dx \lesssim 
R^{-1} \int_{B_R \setminus B_{R/2}} |v|^3 \, dx 
+ \left( R^{-\frac{1}{6}} + R^{-\frac{7}{6}} \int_{B_{2R}} |v|^3 \, dx \right),
\]
which is proved in detail in \cite{ChaeWolf}. 
The role of the generic weight $\omega$ enters to handle the term on the right-hand side of the estimate. In that sense, assumption (H3) plays a role for proving that the term $R^{-1} \int_{B_R \setminus B_{R/2}} |v|^3 \, dx $ goes to 0 as $R \to +\infty$.
\end{remark}

The remainder of this paper is organized as follows. In Section \ref{section.preliminaries}, we state some   results that will be needed later. Section \ref{section.proof.main.result} contains the proof of our main theorem. In Section \ref{section.examples}, we provide a number of concrete examples of weights $\omega$ satisfying assumptions (H1)--(H3), including logarithmic and dyadic weights. Finally, Section \ref{proof.main.lemma} is devoted to the proof of Lemma 2.1.\\

Given $R>1$, in what follows we consider the following notation
\[
A_R := \{ x \in \mathbb{R}^3 \ : \ R/2 < |x| < R \}.
\]

%%%%%%%%%%%%%%%%%%
\section{Preliminaries}\label{section.preliminaries}
%%%%%%%%%%%%%%%%%%

In this section we collect a remark and a lemma, which will play important roles in the proof of our main result.  
%%%%
%\begin{remark}\label{rmk17}
%Condition (H3) implies that for every $\alpha>0$,
%\[
%t^\alpha\,\omega(t)^{-1}\longrightarrow 0 \qquad \mbox{as }t\to 0^+.
%\]
%Indeed, for $0<t<t_0$, we know that $t^\alpha\omega(t)^{-1}\le C_3\,t^\alpha\log(1/t)$, and  
%$t^\alpha\log(1/t) \to0$ as $t\to0^+$, since a logarithm is dominated by any positive power.
%\end{remark}
%%%%
\begin{remark}\label{rkm17}
Condition (H3) implies that, for every $\alpha > 0$, there exists a constant $C_\alpha > 0$ such that
\[
t^\alpha \omega(t)^{-1} \leq C_\alpha,
\qquad 0 < t < 1.
\]
Indeed, for $0 < t < t_0$, the assumption (H3) yields
\[
t^\alpha \omega(t)^{-1}
\leq C\,t^\alpha \log(1/t).
\]
Now, since $ t^\alpha\log(1/t)\to 0$
as $t\to0^+$, we have that  the map  $t\mapsto t^\alpha\log(1/t)$ admits a continuous extension to $[0,t_0]$ and and is therefore bounded. Denote such a bound by $M_\alpha>0$. Thus, we can write 
\[
t^\alpha\omega(t)^{-1}\leq CM_\alpha,
\qquad 0<t<t_0.
\]
On the other hand, by (H1), we know that $\omega(t)\geq\omega(t_0)>0$,
for $t_0\leq t<1$, and hence we get 
\[
t^\alpha\omega(t)^{-1}
\leq \omega(t_0)^{-1},
\qquad t_0\leq t<1.
\]
Thus, by combining the two estimates and setting $C_\alpha:=\max\{CM_\alpha,\omega(t_0)^{-1}\}$, this proves the claim. In particular, note that 
\[
t^\alpha\omega(t)^{-1}\longrightarrow0
\qquad\text{as }t\to0^+.
\]
\end{remark}
%%%%

To continue, for $q>1$ we define
\[
\phi_q(t) := \int_0^t \xi^{q-1}\,\omega(\xi)\,d\xi, \qquad t\ge 0.
\]

\begin{lemma}\label{lem.phiq}
Under (H1)--(H2), $\phi_q$ is well defined, convex, and satisfies
\begin{enumerate}
\item $\phi_q(0)=0$;
\item $\phi_q \in C^1([0, + \infty))$, with $\phi_q'(t) = t^{q-1}\omega(t) > 0$ for $t>0$;
\item $\phi_q(t) \to +\infty$ as $t\to +\infty$;
\item $\displaystyle \phi_q(\tau) \le \frac{\tau^q\,\omega(\tau)}{q}$ for all $\tau\ge 0$.
\end{enumerate}
Consequently, $\phi_q$ admits an inverse $\phi_q^{-1}$ which is continuous, strictly increasing, with $\phi_q^{-1}(0)=0$.
\end{lemma}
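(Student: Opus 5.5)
We prove Lemma \ref{lem.phiq} directly from the definition of $\phi_q$ and (H1)--(H2); (H3) and the equations play no role.

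\emph{Well-definedness and (1).} Fix $q>1$ and $t>0$. The integrand $\xi\mapsto \xi^{q-1}\omega(\xi)$ is continuous and positive on $(0,t]$. By (H2) it is bounded above by $C_\omega\,\xi^{q-1}$, which is integrable on $(0,t)$ because $q-1>0$. Hence the integral converges absolutely, $\phi_q(t)\in[0,+\infty)$, and $\phi_q(0)=0$ trivially.

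\emph{Regularity, (2), and convexity.} For $t>0$ the integrand is continuous on a neighbourhood of $t$, so the fundamental theorem of calculus gives $\phi_q'(t)=t^{q-1}\omega(t)>0$. At $t=0$ we use the bound $0\le\phi_q(h)\le C_\omega h^q/q$ from (H2). It shows that the one-sided difference quotient $\phi_q(h)/h$ tends to $0$, so $\phi_q'(0)=0$. Moreover $\phi_q'(t)\le C_\omega t^{q-1}\to0$ as $t\to0^+$, so the derivative is continuous at $0$ and $\phi_q\in C^1([0,\infty))$. For convexity, observe that $\phi_q'(t)=t^{q-1}\omega(t)$ is a product of two nonnegative nondecreasing functions: $t^{q-1}$ because $q>1$, and $\omega$ by (H1). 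Hence $\phi_q'$ is nondecreasing on $[0,\infty)$, and a $C^1$ function with nondecreasing derivative is convex.

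\emph{(3) and (4).} Monotonicity of $\omega$ gives both. For $t\ge1$,
\[
\phi_q(t)\ \ge\ \int_1^t \xi^{q-1}\omega(\xi)\,d\xi\ \ge\ \omega(1)\,\frac{t^q-1}{q}\ \longrightarrow\ +\infty,
\]
since $\omega(1)>0$. For (4), $\omega(\xi)\le\omega(\tau)$ whenever $\xi\le\tau$, so $\phi_q(\tau)\le\omega(\tau)\int_0^\tau\xi^{q-1}\,d\xi=\tau^q\omega(\tau)/q$. The case $\tau=0$ is trivial.

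\emph{Inverse.} By (2), $\phi_q$ is continuous and strictly increasing on $[0,\infty)$, since its derivative is positive on $(0,\infty)$. Together with $\phi_q(0)=0$, (3), and the intermediate value theorem, $\phi_q$ is a bijection of $[0,\infty)$ onto itself. The inverse of a continuous strictly increasing bijection between intervals is again continuous and strictly increasing, and $\phi_q^{-1}(0)=0$.

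The only step needing some care is $C^1$-regularity at the endpoint $0$, where $\omega$ is not defined. The bound (H2) settles it, so there is no real obstacle.
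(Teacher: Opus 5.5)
Your proof is correct and follows the paper's argument essentially step by step. You use the (H2) bound $\xi^{q-1}\omega(\xi)\le C_\omega\xi^{q-1}$ for well-definedness, monotonicity of $t^{q-1}\omega(t)$ for convexity, $\omega(1)(t^q-1)/q$ for (3), $\omega(\xi)\le\omega(\tau)$ for (4), and the continuous strictly increasing bijection for the inverse. The only cosmetic difference is at $t=0$: you check the one-sided difference quotient directly, whereas the paper extends the integrand by $0$ at the origin and applies the fundamental theorem of calculus on $[0,+\infty)$.
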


The proof of this lemma will be given in the last section of the paper.
 
 \section{Proof of main result}\label{section.proof.main.result}

To begin, we stress the fact that by assumption we know that 
\begin{equation}\label{hipotesis}
\int_{\mathbb{R}^3} |v|^{\frac 9 2} \omega(|v|) \, dx < +\infty,
\end{equation}
where $\omega$ fulfills the assumptions (H1)--(H3).\\

Now note that, by the Sobolev embedding theorem, since $v\in \dot H^1(\mathbb{R}^3)$, we have $v\in L^6(\mathbb{R}^3)$, and hence, by the local embedding $L^6_{\mathrm{loc}}(\mathbb{R}^3)\hookrightarrow L^3_{\mathrm{loc}}(\mathbb{R}^3)$ 
we get that $ v\in L^3_{\mathrm{loc}}(\mathbb{R}^3)$.
Therefore, by \cite[Theorem X.1.1]{galdi2011introduction}, the velocity field $v$  and the associated pressure $P$ are smooth.

To continue, let  $\zeta \in C_c^\infty(B_R)$ be a cut-off function such that $0 \leq \zeta \leq 1$ in $B_R$, $\zeta \equiv 1$ on $B_{R/2}$, and  
\[
|\nabla \zeta| \lesssim R^{-1}, \quad |\nabla^2 \zeta| \lesssim R^{-2}
.
\]  
Thus, by testing the stationary Navier-Stokes equations with $\zeta v$, we get 
\[
\int_{\mathbb{R}^3} |\nabla v|^2 \zeta \, dx = \frac{1}{2} \int_{\mathbb{R}^3} |v|^2 \Delta \zeta \, dx + \frac{1}{2} \int_{\mathbb{R}^3} |v|^4 v \cdot \nabla \zeta \, dx + \int_{\mathbb{R}^3} p v \cdot \nabla \zeta \, dx.
\]
With this identity at hand, considering Hölder's and Young's inequalities, and the approach in \cite{galdi2011introduction} for handling the pressure term, we can write\footnote{As mentioned in Remark \ref{rem.4}, the details of the derivation of this estimate are given in \cite{ChaeWolf}, see pp.~5548--5549. We omit the derivation here for brevity.} 
\begin{equation}\label{estimate.base}
\int_{B_{R/2}} |\nabla v|^2 \, dx \lesssim R^{-1} \int_{A_R} |v|^3 \, dx + \left( R^{-\frac{1}{6}} + R^{-\frac{7}{6}} \int_{B_{2R}} |v|^3 \, dx \right)
=:
I_1 + I_2
.
\end{equation}
Our aim in the following is to prove that 
\[
(I_1,I_2) \to (0,0) \quad \text{as } R\to +\infty.
\]
 \subsection{Control of $I_2$.}
To begin we prove the following result.
\begin{lemma}
If $v\in \dot H^1$ and \eqref{hipotesis} follow, then $v\in L^p (\mathbb{R}^3)$, for $p\in (\tfrac 9 2, 6]$.
\end{lemma}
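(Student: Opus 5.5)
We split $\mathbb{R}^3$ according to the size of $|v|$. On $\{|v|\ge 1\}$ we use $L^6$; on $\{|v|<1\}$ we use the weighted hypothesis \eqref{hipotesis} together with Remark \ref{rkm17}. Fix $p\in(\tfrac92,6]$. The case $p=6$ is just the Sobolev embedding $\dot H^1(\mathbb{R}^3)\hookrightarrow L^6(\mathbb{R}^3)$, so we assume $\tfrac92<p<6$ and write
\[
\int_{\mathbb{R}^3}|v|^p\,dx=\int_{\{|v|\ge1\}}|v|^p\,dx+\int_{\{|v|<1\}}|v|^p\,dx .
\]

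\emph{Large values.} On $\{|v|\ge 1\}$ we have $|v|^p\le |v|^6$ because $p\le 6$. Hence the first integral is bounded by $\|v\|_{L^6}^6<+\infty$.

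\emph{Small values.} On $\{|v|<1\}$ we write
\[
|v|^p=|v|^{9/2}\,\omega(|v|)\cdot |v|^{\alpha}\,\omega(|v|)^{-1},\qquad \alpha:=p-\tfrac92>0 .
\]
Remark \ref{rkm17}, applied with this $\alpha$ and $t=|v(x)|\in(0,1)$, gives $|v|^{\alpha}\omega(|v|)^{-1}\le C_\alpha$. Points where $v=0$ contribute nothing to the integral. Therefore
\[
\int_{\{|v|<1\}}|v|^p\,dx\le C_\alpha\int_{\mathbb{R}^3}|v|^{9/2}\omega(|v|)\,dx<+\infty
\]
by \eqref{hipotesis}. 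Adding the two pieces gives $v\in L^p(\mathbb{R}^3)$.

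This argument has no real obstacle. The only point needing care is the small-value region: there one must have the strict inequality $p>\tfrac92$, so that a positive power $|v|^\alpha$ is available to absorb $\omega(|v|)^{-1}$. This absorption is exactly where (H3) enters, through Remark \ref{rkm17}, and it explains why the endpoint $p=\tfrac92$ is excluded. The bound at the upper end, $p\le6$, comes only from the $\dot H^1$ regularity.
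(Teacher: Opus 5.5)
Your proof is correct and follows the same route as the paper. You split at $|v|=1$; on the small-value set you absorb $\omega(|v|)^{-1}$ into the positive power $|v|^{p-9/2}$ via Remark~\ref{rkm17}, and on the large-value set you use $|v|^p\le|v|^6$ with $v\in L^6$. Your large-value estimate is in fact stated more accurately than the paper's displayed chain for $J_2$, which bounds $|v|^{9/2}\omega(|v|)$ instead of $|v|^q$. Treating $p=6$ separately is harmless but unnecessary, since the general argument already covers that case.
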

 \begin{proof}
First we split the domain $\mathbb{R}^3$ to write 
\[
\int_{\mathbb{R}^3} |v|^q = \int_{\{ |v| \leq 1 \}} |v|^q + \int_{\{ |v| > 1 \}} |v|^q = J_1 + J_2.
\]
First we control $J_1$. For doing it, we start by
considering $t=|v|$, and
\[
\alpha = q - \frac{9}{2} > 0.
\]
 By Remark \ref{rkm17} we know that there exists a numerical constant $C_q>0$ such that 
\[
t^{q-9/2} \omega(t) ^{-1}  \leq C_q
 \quad (0 < t   \leq 1).
\]
Hence, we can write 
\[
t^q \leq C_q  t^{9/2} \omega (t)  \quad (0 < t    \leq 1).
\]
and then 
\[
\int_{\{|v| \leq 1\}} |v|^q \, dx 
\lesssim
\int_{\{|v| \leq 1\}} |v|^{9/2} \omega(|v|) \, dx
\lesssim 
\int_{\mathbb{R}^3} |v|^{9/2} \omega(|v|)\, dx
<
\infty. 
\]
Now we control $J_2$.  {\color{black} Note that, the boundedness of  $\omega (t)$}  implies
\[
\int_{\{|v| > 1\}} |v|^{9/2} \omega(|v|) \, dx 
\lesssim
\int_{\{|v| > 1\}} |v|^{9/2}   \, dx
\lesssim
\int_{\{|v| > 1\}} |v|^{6}   \, dx
\lesssim 
\int_{\mathbb{R}^3} |v|^{6} \, dx
<\infty .
\]
With this we conclude the proof of this lemma.

\end{proof}
 
With this result at hand we proceed to control $I_2$.
To this end, we first write 
\[
\int_{B_{2R}} |v|^3 \, dx = |B_{2R}| \frac{1}{|B_{2R}|} \int_{B_{2R}} |v|^3 \, dx.
\]
Thus, since the map $
t \mapsto t^{3/q}
$
is concave (as $3/q < 1$), Jensen's inequality yield
\[
\frac{1}{|B_{2R}|} \int_{B_{2R}} |v|^3 \, dx \leq \left( \frac{1}{|B_{2R}|} \int_{B_{2R}} |v|^q \, dx \right)^{3/q}.
\]
Thus, we can write 
\[
\int_{B_{2R}} |v|^3 \, dx \leq |B_{2R}|^{1-\frac{3}{q}} \left( \int_{B_{2R}} |v|^q \, dx \right)^{3/q}.
\]
Then, considering that $|B_{2R}| \sim R^3,$ we get
\[
\int_{B_{2R}} |v|^3 \, dx \lesssim R^{3-\frac{9}{q}} \|v\|_{L^q(\mathbb{R}^3)}^{3}.
\]
Hence, we obtain 
\[
R^{-7/6} \int_{B_{2R}} |v|^3 \, dx \leq R^{\frac{11}{6} - \frac{9}{q}} \|v\|_{L^q}^3. \tag{2}
\]
Thus, if 
$\frac{9}{2} < q < \frac{54}{11},
$  we conclude
$
\frac{11}{6} - \frac{9}{q} < 0,
$ and then $
R^{\frac{11}{6} - \frac{9}{q}} \to 0.
$
Provided with this information we get
\[
R^{-7/6} \int_{B_{2R}} |v|^3 \, dx \to 0  \qquad \text{as }R\to +\infty
\]
and thus
\[
R^{-1/6} + R^{-7/6} \int_{B_{2R}} |v|^3 \, dx \to 0
\qquad \text{as }R\to +\infty
.
\]

 \subsection{Control of $I_1$.}

Considering
the   constant $\mathcal{C}= \frac 8 7 \left( \frac 4 3 \pi  \right)^{-1} $ and the fact that $\mathrm{meas}(A_R)\sim R^3$, we write 
\[
\phi_{\frac{3}{2}}\left(\frac{\mathcal{C}}{R}\int_{A_R}|v|^3\,dx\right)
=
 \phi_{\frac 32}\left(\frac{1}{ \mathrm{meas}(A_R) }\int_{A_R} R^2|v|^3\,dx\right).
\]
Then, since $\phi_{3/2}$ is convex, by considering Jensen's inequality and part (4) of Lemma \ref{lem.phiq}  we get 
 \begin{align*}
 \phi_{\frac 32}\left(\frac{\mathcal{C}}{R^3}\int_{A_R} R^2|v|^3\,dx\right)
& 
\le \frac{C}{R^3}\int_{A_R}\phi_{\frac 32}(R^2|v|^3)\,dx
\\
&\lesssim \int_{A_R} |v|^{\frac 92}\,\omega(R^2|v|^3)\,dx.
\end{align*}
Before continue, we state the following result due to Chae and Wolf \cite{ChaeWolf}. 
\begin{lemma}
Let $f\in L^1(\mathbb{R}^3)$. Then for every $\varepsilon>0$ there exists $R>\varepsilon^{-1}$ such that
\[
\int_{A_R} |f|\,dx \le \frac{\varepsilon}{\log R}.
\]
\end{lemma}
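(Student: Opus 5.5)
We argue by contradiction, using the dyadic annuli $A_{2^k}$ and the divergence of the harmonic-type series $\sum_k 1/k$. Fix $\varepsilon>0$ and suppose that
\[
\int_{A_R}|f|\,dx>\frac{\varepsilon}{\log R}\qquad\text{for every } R>\varepsilon^{-1}.
\]
Choose an integer $k_0$ with $2^{k_0}>\max\{\varepsilon^{-1},1\}$, and apply the assumed inequality along the dyadic sequence $R=2^k$, $k\ge k_0$:
\[
\int_{A_{2^k}}|f|\,dx>\frac{\varepsilon}{k\log 2}.
\]

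The next step is to check that the annuli $A_{2^k}=\{2^{k-1}<|x|<2^k\}$ are pairwise disjoint. This is immediate, since consecutive annuli meet only on the sphere $|x|=2^{k-1}$, which is excluded by the strict inequalities (and in any case has measure zero). Summing over $k\ge k_0$ then gives
\[
\|f\|_{L^1(\mathbb R^3)}\ \ge\ \sum_{k\ge k_0}\int_{A_{2^k}}|f|\,dx\ >\ \frac{\varepsilon}{\log 2}\sum_{k\ge k_0}\frac1k=+\infty,
\]
which contradicts $f\in L^1(\mathbb R^3)$. Hence some $R>\varepsilon^{-1}$, in fact some dyadic $R=2^k$ with $k\ge k_0$, satisfies $\int_{A_R}|f|\,dx\le \varepsilon/\log R$.

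There is no real obstacle here. The only point needing care is to make the quantifiers explicit: the negation of the claim must hold for all admissible $R$, and it is enough to use it along the dyadic subsequence. One also has to make sure $\log R>0$, which holds since $R>1$.

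The result can be strengthened slightly. Since the tails $\sum_{k\ge K}\int_{A_{2^k}}|f|$ tend to $0$, the same argument yields infinitely many such $R$, and in fact a sequence $R_j\to+\infty$. This sequence form is what the proof of Theorem~\ref{thm:main1} will use, taking $f=|v|^{9/2}\omega(|v|)$ and exploiting (H3) to absorb the factor $\omega(R^2|v|^3)$ against $\log R$.
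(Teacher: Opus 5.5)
Your proof is correct and complete. The annuli $A_{2^k}$, $k\ge k_0$, are pairwise disjoint, so $\sum_{k\ge k_0}\int_{A_{2^k}}|f|\,dx\le\|f\|_{L^1}<\infty$. This cannot coexist with the lower bound $\varepsilon/(k\log 2)$ for every $k$, because $\sum 1/k$ diverges.

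The paper gives no proof of this lemma; it only quotes it from Chae--Wolf. Your dyadic summation argument is the standard proof and fills that gap. Your choice $2^{k_0}>\max\{\varepsilon^{-1},1\}$ also correctly enforces $R>1$, hence $\log R>0$. The statement leaves this implicit: for $\varepsilon>1$ the range $R>\varepsilon^{-1}$ would otherwise include $R\le 1$.

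One small inaccuracy sits in your closing remark, outside the proof itself. In the paper, the factor $\omega(R^2|v|^3)$ is simply bounded using (H2). Hypothesis (H3) is used instead to bound $\omega(|v|)^{-1}\lesssim\log R$ on the set $\{R^2|v|^3>\varepsilon\}$, and that is what cancels the $1/\log R$ supplied by the lemma.
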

This Lemma has the following   consequence.
\begin{corollary}\label{cor.log}
For every $\varepsilon>0$ there exists $R>\varepsilon^{-1}$ such that
\[
\int_{A_R} |v|^{\frac 92}\,\omega(|v|)\,dx \le \frac{\varepsilon}{\log R}.
\]
\end{corollary}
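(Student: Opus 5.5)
This is a direct application of the preceding Chae--Wolf lemma with $f := |v|^{9/2}\,\omega(|v|)$. The only thing to check is that $f\in L^1(\mathbb{R}^3)$, and that is exactly the standing hypothesis \eqref{hipotesis}.

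The steps, in order, are as follows.

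First, $f$ is measurable and nonnegative. Indeed, $v$ is smooth, and $\omega$ is continuous on $(0,+\infty)$; on the set $\{v=0\}$ we interpret $|v|^{9/2}\omega(|v|)$ as $0$. This convention is consistent with the bound $|v|^{9/2}\omega(|v|)\le C_\omega |v|^{9/2}$, which follows from (H2).

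Second, \eqref{hipotesis} gives
\[
\int_{\mathbb{R}^3}|f|\,dx=\int_{\mathbb{R}^3}|v|^{\frac 92}\,\omega(|v|)\,dx<+\infty .
\]

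Third, we apply the lemma of Chae and Wolf to $f$. For each $\varepsilon>0$ it produces some $R>\varepsilon^{-1}$ with $\int_{A_R}|f|\,dx\le \varepsilon/\log R$, and this is precisely the claimed inequality.

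There is no real obstacle here, because all the content sits in the quoted lemma. If one wanted to make the argument self-contained, one would prove that lemma itself. The dyadic annuli $A_{2^k}$ are pairwise disjoint, so $\sum_k \int_{A_{2^k}}|f|\le \|f\|_{L^1}$. If the conclusion failed for every $R=2^k$ with $2^k>\varepsilon^{-1}$, we would have
\[
\int_{A_{2^k}}|f|\,dx>\frac{\varepsilon}{k\log 2}
\]
for all large $k$. Summing over $k$ then yields a divergent harmonic series, which contradicts $f\in L^1$.

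The subtle point in that argument is the choice of dyadic radii. It guarantees both that the annuli are disjoint and that $\log R\sim k$, so the failure assumption produces a harmonic-type divergence.
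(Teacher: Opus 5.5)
Your proposal is correct and matches the paper, which obtains the corollary directly from the quoted Chae--Wolf lemma applied to $f=|v|^{9/2}\omega(|v|)$; this $f$ is integrable by \eqref{hipotesis}. Your dyadic-annulus argument for the lemma is also correct: the annuli $A_{2^k}$ are pairwise disjoint, and $\sum_k 1/k$ diverges. It supplies a proof of a lemma that the paper only cites from Chae and Wolf.
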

To continue, given $\varepsilon \in (0,1)$, we decompose
\[
A_R = A_1 \cup A_2 := \left\{ R^2|v|^3 > \varepsilon \right\} \cup \left\{ R^2|v|^3 \le \varepsilon \right\},
\]
with the aim to estimate 
\[
\int_{A_2} |v|^{\frac92}\,\omega(R^2|v|^3)\,dx 
\quad \text{and} \quad 
 \int_{A_1} |v|^{\frac92}\,\omega(R^2|v|^3)\,dx
 .
\]

\begin{itemize}
 \item {\bf Estimate on $A_2$.}  By (H1), we know that  $\omega(R^2|v|^3)\le\omega(\varepsilon)$ on $A_2$, and then 
 \[
\int_{A_2} |v|^{\frac92}\,\omega(R^2|v|^3)\,dx
\le \omega(\varepsilon)\int_{A_2} |v|^{\frac 92}\,dx.
\]
Now,  considering that $|v|^{9/2}\le \varepsilon^{3/2}R^{-3}$ on $A_2$, together with the boundedness of $\omega$, we get  
\[
\int_{A_2} |v|^{\frac92}\,\omega(R^2|v|^3)\,dx
\le \frac{\varepsilon^{3/2}}{R^3}\,\omega(\varepsilon)\,\mathrm{meas}(A_2)
\lesssim \varepsilon^{3/2}\,\omega(\varepsilon) \lesssim \varepsilon^{3/2}.
\]

\item { \bf Estimate on $A_1$.} Note that,  by the  boundedness of $\omega$ in (H2), we can write
\[
\int_{A_1} |v|^{\frac 92}\,\omega(R^2|v|^3)\,dx \lesssim \int_{A_1} |v|^{\frac 92}\,dx.
\]
To continue, we emphasize the fact that on $A_1$ we can write 
\[
|v| > \left(\varepsilon R^{-2}\right)^{1/3} =: \tau_R.
\]
{\color{black} 
Note that, since $\omega$ is nondecreasing (by (H1)), we have  $\omega(|v|)^{-1} \le \omega(\tau_R)^{-1}$.
Moreover, considering that $R\ge \varepsilon^{-1}$, we have  $\tau_R=(\varepsilon R^{-2})^{1/3}\le(\varepsilon\cdot\varepsilon^2)^{1/3}=\varepsilon$. Considering this fact, and as we eventually take $\varepsilon\to0^+$, we may assume without loss of generality that $\varepsilon<t_0$, so that $\tau_R<t_0$ and (H3) applies, then we can write 
\[
\omega(\tau_R)^{-1} \lesssim \log(1/\tau_R).
\]
Now,  using that $\log(1/\tau_R) = \frac13\log\!\left(\frac{R^2}{\varepsilon}\right)$ and 
\[
\frac13\log\!\left(\frac{R^2}{\varepsilon}\right) \lesssim 
\log R, 
\]
we conclude  
\[
\omega(|v|)^{-1} \le 
\omega(\tau_R)^{-1} \lesssim \log R.
\]
on $A_1$.
}
Hence, we get
\[
\int_{A_1} |v|^{\frac92}\,dx = \int_{A_1} |v|^{\frac 92}\,\omega(|v|)\cdot\omega(|v|)^{-1}\,dx
\lesssim
 \log R \int_{A_1} |v|^{\frac92}\,\omega(|v|)\,dx,
\]
and by Corollary \ref{cor.log} we obtain
\[
\int_{A_1} |v|^{\frac92}\,\omega(R^2|v|^3)\,dx
\lesssim \log R \int_{A_1} |v|^{\frac92}\,\omega(|v|)\,dx
\lesssim \log R\cdot \frac{\varepsilon}{\log R} \lesssim \varepsilon.
\]
\end{itemize}
To continue, note that, by  combining the estimates on $A_1$ and $A_2$, we can write
\[
\phi_{\frac 32}\left(\frac{C}{R}\int_{A_R} |v|^3\,dx\right) \lesssim \varepsilon 
\quad (\mbox{for }0<\epsilon <1).
\]
Then, by Lemma \ref{lem.phiq}, we know that $\phi_{3/2}^{-1}$ exists, is continuous, increasing, and $\phi_{3/2}^{-1}(0)=0$. Applying $\phi_{3/2}^{-1}$ (which preserves the inequality ) we get
\[
\frac{C}{R}\int_{A_R} |v|^3\,dx \le \phi_{\frac 32}^{-1}(C\varepsilon),
\]
and since $\phi_{3/2}^{-1}(C\varepsilon)\to \phi_{3/2}^{-1}(0)=0$ as $\varepsilon\to 0$ ( or equivalently $R\to+ \infty$, since $R>\varepsilon^{-1}$), we conclude
\[
I_1=R^{-1}\int_{A_R} |v|^3\,dx \longrightarrow 0 \qquad \text{as } R \to +\infty.
\]

Thus, by mixing the limits for $I_{1}$ and $I_{2}$ with the estimate 
\eqref{estimate.base}, we conclude
\begin{equation}\label{LimiteSobolev}
\lim_{R\to +\infty}\int_{B_{R/2} }|\nabla u|^2 dx=\| u\|_{\dot{H}^1}=0.
\end{equation}
Then, by considering Sobolev embeddings, we get $\|\vu\|_{L^6}=0$, and in consequence  $\vu= 0$. 

\section{Examples}\label{section.examples}

In the following we prove that the examples of weight functions mentioned in Remarks \ref{rmk11}, \ref{rmk22} and \ref{rmk33}  in fact satisfy the assumptions (H1), (H2) and (H3).
\subsection{Example 1: $\omega_\beta(t) := \left[ \log\left(2 + \frac{1}{t}\right) \right]^{-\beta}$}\label{subsec.4.1}
For $\beta \in (0, 1]$, we define
\[
\omega_\beta(t) := \left[ \log\left(2 + \frac{1}{t}\right) \right]^{-\beta}.
\]
 
Consider $g(t) := \log(2 + \frac{1}{t})$, which we already know is a decreasing and positive function (since $g(t) > \log 2$). On the other hand, since $\beta > 0$, the map $g \mapsto g^{-\beta}$ is decreasing in $g$, and thus $\omega_\beta(t) = g(t)^{-\beta}$ is nondecreasing in the variable $t$.\\

On the other hand, as $t$ evolves to $+\infty$, we have that $g(t) \to \log 2$,  and therefore  $\omega_\beta(t) \to (\log 2)^{-\beta}$. Then, since $g(t) > \log 2$, we have $\omega_\beta(t) < (\log 2)^{-\beta} =: C_\omega$ for all $t>0$.\\

Finally, note that,  $ \omega_\beta(t)^{-1} = g(t)^\beta$,  $g(t) = \log(1/t) + \log(1+2t)$, and  for  $0<t\le1$, we have $\log(1+2t)\le\log3$. Thus, we get  $g(t)\le\log(1/t)+\log3$.
Now, since  $\log(1/t)\ge1$ for $t\le1/e$, we can write 
\[
g(t) \le (1+\log3)\log(1/t),
\]
for $0<t  \le \frac 1 e$. 
Then, considering that  $\beta\le1$, $x\mapsto x^\beta$ is an increasing map, and $[\log(1/t)]^\beta\le\log(1/t)$ for $t\le1/e$ (remember that $\log(1/t)\ge1$ in such interval), we conclude  
\[
\omega_\beta(t)^{-1} = g(t)^\beta \le (1+\log3)^\beta\,\log(1/t) \qquad \text{for all } 0<t<\tfrac1e.
\]
Thus (H3) holds considering $C=(1+\log3)^\beta$ and $t_0=1/e$.

%%%%%%
\subsection{Example 2: $k$ nested logarithms}\label{subsec.4.2}
For $k\ge 1$ an integer, we define
\[
L_0(t):=\frac1t,
\qquad
L_j(t):=\log\bigl(2+L_{j-1}(t)\bigr),
\quad j=1,\dots,k,
\qquad t>0,
\]
and we set
\[
\omega_k(t):=\frac1{L_k(t)}.
\]
Note that we can recast this equivalently as  the expression
\[
\omega_k(t)
=
\frac{1}{
\log\Bigl(2+\log\bigl(2+\cdots+\log(2+\tfrac1t)\cdots\bigr)\Bigr)
},
\]
with $k$ nested logarithms. We claim that $\omega_k$ satisfies (H1)--(H3) for every $k\ge 1$.\\

In the next, we will verify the three hypotheses separately.

\emph{Assumption (H1).}
Note that, for each $j\ge 0$, the functions $L_j$ are positives, strictly decreasing on $(0,+\infty)$, and
\[
\lim_{t\to 0^+}L_j(t)=+\infty.
\]
In fact, we can prove it by induction on $k$. f
For $j=0$, this follows from $L_0(t)=1/t$. Now we suppose that the claim holds for $L_{j-1}$. Thus, since $L_{j-1}(t)>0$, we can write 
\[
L_j(t)=\log\bigl(2+L_{j-1}(t)\bigr)\ge \log 2>0.
\]
Moreover,  as the map $x\mapsto \log(2+x)$ is strictly increasing, while $L_{j-1}$ is strictly decreasing, we get that $L_j$ is strictly decreasing. Finally, considering that  $L_{j-1}(t) \to + \infty$ as $t\to 0^+$, we get
\[
L_j(t)=\log\bigl(2+L_{j-1}(t)\bigr)\to + \infty.
\]
In particular, $L_k$ is positive and strictly decreasing, therefore we get that 
\[
\omega_k(t)=\frac1{L_k(t)}
\]
is strictly increasing, and thus  (H1) holds.

\emph{Assumption (H2).}
To begin, we define recursively
\[
c_0:=0,
\qquad
c_j:=\log(2+c_{j-1}),
\quad j\ge 1.
\]
Then, we easily get that $c_j>0$ for every $j\ge 1$.
In the following we will prove that 
\[
\lim_{t\to+\infty}L_j(t)=c_j
\]
for each $j\ge 0$. 
Our strategy will be by induction also. Indeed, this is immediate for $j=0$, since $L_0(t)\to 0=c_0$. 
Now, if we suppose that $L_{j-1}(t)\to c_{j-1}$, then, by continuity of  the map $x\mapsto \log(2+x)$, we can write 
\[
L_j(t)
=
\log\bigl(2+L_{j-1}(t)\bigr)
\longrightarrow
\log(2+c_{j-1})
=
c_j.
\]
Thus, since $L_k$ is strictly decreasing and converges to $c_k>0$ as $t\to +\infty$, we have for each $t>0$ that 
\[
L_k(t)>c_k
.
\]
Then, consequently, we get
\[
0<\omega_k(t)
=
\frac1{L_k(t)}
<
\frac1{c_k}
=:C_{\omega,k}.
\]
Hence, the assumption  (H2) holds.

\emph{Assumption (H3).}
In what follow we will use the following fact, for each $x\ge 2$,
we have\footnote{
This can be proven by setting
\[
\Delta(x):=x-\log(2+x).
\]
and stressing the fact that, for  $x>-1$ we have
\[
\Delta'(x)
=
1-\frac1{2+x}
=
\frac{1+x}{2+x}>0
.
\]
Now, note that
\[
\Delta(2)=2-\log 4>0.
\]
Hence, we get that $\Delta(x)>0$ for all $x\ge 2$ and the claim follows.
}
\begin{equation} \label{ineq log 2+x}
\log(2+x)\le x.
\end{equation}
To continue, we consider $k\ge 1$ fixed. Then, consiring the information proven above we know that,  for each $j=0,\dots,k-1$, we can write 
\[
L_j(t)\to +\infty
\qquad \text{ad }t\to 0^+
.
\]
Thus, for each $j=0,\dots,k-1$, there exists $s_j>0$ such that, for $0<t<s_j$, we get
\[
L_j(t)\ge 2
.
\]
To continue, we set $t_k^*:=\min_{0\le j\le k-1} s_j>0$.
Then, for $0<t<t_k^*$, we have $L_j(t)\ge 2$ for each $j=0,\dots,k-1$. Thus, considering \eqref{ineq log 2+x}  with $x=L_j(t)$, we get
\[
L_{j+1}(t)
=
\log\bigl(2+L_j(t)\bigr)
\le L_j(t),
\qquad \text{for }j=0,\dots,k-1.
\]
Thus, we conclude that, for $0<t<t_k^*$, we have
\begin{equation}\label{ineq 22}
L_k(t)\le L_{k-1}(t)\le\cdots\le L_1(t)\le L_0(t)
.
\end{equation}
Now, note that for $0<t\le 1$, we have the identity
\[
L_1(t)
=
\log\left(2+\frac1t\right)
=
\log(1/t)+\log(1+2t).
\]
Then, since $1+2t\le 3$, we  get
\[
L_1(t)
\le
\log(1/t)+\log 3
.
\]
Thus, if $0<t\le 1/e$, we have that $\log(1/t)\ge 1$, and we can write 
\[
\log 3\le \log 3\,\log(1/t).
\]
This implies that, 
\begin{equation}\label{ineq 33}
L_1(t)
\le
(1+\log 3)\log(1/t)
,
\end{equation}
for $0<t\le 1/ e$.  
Now,  we  set $t_0:=\min\left\{t_k^*,\frac1e\right\}$. 
Then, for $0<t<t_0$, combining \eqref{ineq 22} and \eqref{ineq 33}, we obtain
\[
\omega_k(t)^{-1}
=
L_k(t)
\le L_1(t)
\le
(1+\log 3)\log(1/t),
\]
and thus we conclude  that (H3) holds considering $C=1+\log 3$.
%which is independent of $k$.
%%%%%%%%%%%%%%%%%%
%%%%%%%%%%%%%%%%%%
\subsection{Examples 3 and 4}\label{subsec.4.3}
In the following examples we consider the following lemma to prove assumption (H3).
\begin{lemma}\label{forexamples}
If $\omega : (0, +\infty) \to (0, +\infty)$ satisfies $\lim_{t \to 0^+} \omega(t) = L$ for some $L \in (0, +\infty)$, then $\omega$   satisfies assumption (H3).
\end{lemma}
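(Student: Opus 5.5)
We need $C>0$ and $t_0\in(0,1)$ with $\omega(t)^{-1}\le C\log(1/t)$ for all $0<t<t_0$. The plan is to use the limit $L>0$ to make $\omega(t)^{-1}$ bounded near the origin, and then to use that $\log(1/t)$ is bounded below by a positive constant (indeed $\geq 1$) once $t$ is small. Only the hypothesis $\lim_{t\to0^+}\omega(t)=L\in(0,+\infty)$ is used; (H1) and (H2) are not needed.

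\emph{Step 1: a lower bound on $\omega$ near $0$.} Apply the definition of the limit with tolerance $L/2$. This gives $\delta>0$ such that $|\omega(t)-L|<L/2$ for $0<t<\delta$. In particular $\omega(t)>L/2>0$, hence
\[
\omega(t)^{-1}<\frac{2}{L},\qquad 0<t<\delta .
\]

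\emph{Step 2: choice of $t_0$.} Set $t_0:=\min\{\delta,1/e\}$. If $t_0$ happens to equal $1/e$, it is still strictly less than $1$, as (H3) requires. For $0<t<t_0$ we have $t<1/e$, so $\log(1/t)> 1$.

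\emph{Step 3: conclusion.} For $0<t<t_0$, combining the two previous steps gives
\[
\omega(t)^{-1}<\frac{2}{L}\le \frac{2}{L}\log(1/t).
\]
Thus (H3) holds with $C:=2/L$ and this $t_0$.

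There is no real obstacle. The only point needing care is the requirement $t_0\in(0,1)$, which is enforced by taking the minimum with $1/e$; this choice also makes $\log(1/t)\ge1$ available directly.
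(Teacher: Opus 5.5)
Your proof is correct and follows essentially the same route as the paper's: use the positive limit to bound $\omega(t)^{-1}$ by a constant near $0$, then compare that constant with $\log(1/t)$. The only cosmetic difference is that the paper absorbs the bound $M=\frac1L+1$ into the choice of $t_0$ (shrinking $t$ until $\log(1/t)\ge M$, so $C=1$), whereas you put $2/L$ into $C$ and use $\log(1/t)\ge 1$ for $t<1/e$, which also makes the requirement $t_0<1$ explicit.
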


\begin{proof}
Since $L > 0$, we can write 
\[
\lim_{t \to 0^+} \omega(t)^{-1} = 1/L
.
\] 
Note that, there exist $t_1 > 0$ and $M > 0$ such that  
\[
\omega(t)^{-1} \leq M
\]
for $0 < t < t_1$ (taking, for instance, $M = \frac{1}{L} + 1$.
Now, since $\log(1/t) \to + \infty$ as $t \to 0^+$, there exists $t_2$ such that $\log(1/t) \geq M$ for $t < t_2$.
% (podemos tomar $t_2 = e^{-M}$ p.e.).  
Then, considering $t_0 = \min\{t_1, t_2 \}$, we obtain  
\[
\omega(t)^{-1} \leq M \leq \log(1/t)
\]  
in $(0, t_0)$. This concludes the proof.
\end{proof}
%%%
\subsubsection{Example 3}
We consider the weight $\omega(t) := 2+\arctan(t)$, for $t>0$. In the next we will verify (H1)--(H3).

First note that, the function $\arctan$ is strictly increasing on $\mathbb{R}$ (since $\arctan'(t)=\frac{1}{1+t^2}>0$ for all $t$). Hence, we have  $\omega(t)=2+\arctan(t)$ is strictly increasing on $(0,+\infty)$. On the other hand, for $t>0$,  $\arctan(t)>0$, thus we get $\omega(t)>2>0$, and then $\omega:(0, + \infty)\to(0, + \infty)$ is well defined.\\

Now, note that, since $\arctan(t)<\frac{\pi}{2}$ for all $t\in\mathbb{R}$, we have
\[
\omega(t) = 2+\arctan(t) < 2+\frac{\pi}{2} =: C_\omega, 
\qquad t>0
.
\]

Finally, we stress the fact that, by continuity of $\arctan$ at $t=0$, we can write 
\[
\lim_{t\to0^+}\omega(t) = 2+\arctan(0) = 2+0 = 2 \in (0,+\infty).
\]
Then, as this limit is finite and strictly positive, considering Lemma \eqref{forexamples},  we conclude that $\omega$ satisfies (H3). 
Hence $\omega(t)=2+\arctan(t)$ satisfies (H1)--(H3).  
 %%%%%%
\subsubsection{Example 4}
We consider now the weight $\omega(t) := 3-\dfrac{1}{1+t}$, for $t>0$.  In the next we will verify (H1)--(H3).\\

First, note that the function $t\mapsto \frac{1}{1+t}$ is strictly decreasing on $(0, + \infty)$ (its derivative is $-\frac{1}{(1+t)^2}<0$). Hence, we have that  $t\mapsto -\frac{1}{1+t}$ is an strictly increasing map, and therefore we get that  $\omega(t) = 3-\frac{1}{1+t}$ is strictly increasing on $(0, + \infty)$. \\

On the other hand, for $t>0$, we have $0<\frac{1}{1+t}<1$. Then, we can write
\[
2 = 3-1 < \omega(t) < 3-0 = 3.
\]
In particular we note that  $\omega(t)>0$, and thus $\omega:(0, + \infty)\to(0, + \infty)$ is well defined. 
Moreover,  $\omega(t)<3=:C_\omega$ for all $t>0$.\\

Finally, we emphasize that, by continuity of the map $t\mapsto \frac{1}{1+t}$ at $t=0$, we have 
\[
\lim_{t\to0^+}\omega(t) = 3-\frac{1}{1+0} = 3-1 = 2 \in (0, + \infty).
\]
Then, since this limit is finite and strictly positive, Lemma \ref{forexamples} applies with $L=2$, and thus we conclude that $\omega$ satisfies assumption (H3).

%%%%
\subsection{Example 5}\label{subsec.4new}
Our next example is based in the following lemma.
\begin{lemma}\label{forexamples} Let $\eta:(0,+\infty)\to(0,+\infty)$ be a function satisfying the following two conditions:
 \begin{itemize}
     \item $\lim_{t \to 0^+} \eta(t) = L$ for some $L \in (0, +\infty)$, and
     \item there exists $\delta\in(0,1)$ such that $\eta$ is non-decreasing and continuous in $(0,\delta]$.
 \end{itemize}
 Then the function $\omega:(0,+\infty)\to(0,+\infty)$, defined as
$$
\omega(t)=
\left\{
\begin{array}{ccl}
\eta(t)&\mbox{if}& t\leq\delta;\\
\eta(\delta)&\mbox{if}& t>\delta.\\
\end{array}
\right.
$$ 
satisfies (H1)-(H3).
 \end{lemma}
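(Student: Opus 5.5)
The plan is to check (H1), (H2) and (H3) directly for the truncated function $\omega$, and to get (H3) from the earlier Lemma~\ref{forexamples} (the first one, in Subsection~\ref{subsec.4.3}). That lemma requires only that $\omega(t)\to L\in(0,+\infty)$ as $t\to0^+$. Since $\delta>0$, we have $\omega=\eta$ on $(0,\delta]$, so $\lim_{t\to0^+}\omega(t)=\lim_{t\to0^+}\eta(t)=L$, and (H3) follows at once. The assumption that $\omega$ takes values in $(0,+\infty)$ also holds, because $\eta>0$ and $\eta(\delta)>0$.

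Continuity, required by Assumption~\ref{ass:omega}, is checked piece by piece.
\begin{itemize}
\item On $(0,\delta)$, $\omega=\eta$ is continuous by hypothesis.
\item On $(\delta,+\infty)$, $\omega$ is constant.
\item At $t=\delta$, the left limit equals $\eta(\delta)$ by continuity of $\eta$ on $(0,\delta]$, and the right limit is the constant value $\eta(\delta)$.
\end{itemize}

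For (H1), take $0<s<t$ and argue by cases.
\begin{itemize}
\item If $t\le\delta$, then $\omega(s)=\eta(s)\le\eta(t)=\omega(t)$, since $\eta$ is nondecreasing on $(0,\delta]$.
\item If $s\le\delta<t$, then $\omega(s)=\eta(s)\le\eta(\delta)=\omega(t)$.
\item If $\delta<s$, both values equal $\eta(\delta)$.
\end{itemize}

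For (H2), monotonicity gives $\omega(t)\le\eta(\delta)$ for all $t>0$: on $(0,\delta]$ because $\eta$ is nondecreasing there, and on $(\delta,+\infty)$ by definition. So one may take $C_\omega:=\eta(\delta)$.

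The argument is routine throughout. The only points needing care are continuity at the junction $t=\delta$, which uses continuity of $\eta$ on the closed endpoint $\delta$, and making sure the limit hypothesis transfers from $\eta$ to $\omega$, which holds because the two functions coincide near $0$.
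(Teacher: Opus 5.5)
Your proof is correct and takes essentially the same route as the paper. Condition (H3) uses only $\omega(0^+)=\eta(0^+)=L>0$ against $\log(1/t)\to+\infty$: the paper redoes this comparison inline with $\lambda(t)=-1/\log t$ to get $\omega(t)^{-1}\le\log(1/t)$ near $0$, while you cite the lemma of Subsection~\ref{subsec.4.3}, which gives the same bound with $C=1$; (H1), (H2) with $C_\omega=\eta(\delta)$, and continuity then follow from the truncation at $\delta$, which you verify case by case more explicitly than the paper, which simply asserts these properties.
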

\begin{proof} Let $\lambda:(0,1)\to(0,+\infty)$ be defined as $\lambda(t)=-1/\log(t)$. We have
$\lambda(0+)=0<L=\omega(0+)$,  $\omega(1-)=\eta(\delta)<\infty=\lambda(1-)$, and, $\lambda$ and $\omega$ are continuous in  $(0,1)$. Hence there exists $t_0\in(0,1)$ such that  $\lambda(t)\leq \omega(t)$, for all $t\in(0,t_0)$. Consequently, $\omega$ is non-decreasing, continuous, and bounded by $\eta(\delta)$. Moreover
$$
\frac{1}{\omega(t)}\leq 
\frac{1}{\lambda(t)}=\log(1/t),\quad\mbox{for all } t\in(0,t_0).
$$
This completes the proof.
\end{proof}

\subsubsection{Example 5} Consider $\eta:(0,+\infty)\to(0,+\infty)$ defined as $\eta(t)=2-\exp(-t)$. Then,  the corresponding $\omega$, with $\delta=\frac 1 2$,  satisfies (H1)-(H3).

{\color{black}
\subsection{Example 6: a dyadic weight}\label{subsec.4.4}

To begin, given $t\in(0,1]$ we define  
\[
N(t):=\max\{n\in\mathbb Z_{\ge0}\ :\ 2^{-n}\ge t\}.
\]
Note that the the maximum above exists and is
unique, since the set on the right-hand side of the expression above is nonempty ( $n=0$ is contained there as $t\le1$) and
bounded above (as $2^{-n}\to0$ as $n\to\infty$). 
In particular, we stress the fact that $N(t)$ is the unique integer $n\ge0$ such that
\begin{equation}\label{eq:dyadic-interval}
t\in\big(2^{-(n+1)},2^{-n}\big].
\end{equation}
With this at hand, we define the weight $\omega_{\text{dyadic}}:(0,+\infty)\to(0,+\infty)$ as 
\[
\omega_{\text{dyadic}} (t):=
\begin{cases}
1, & t\ge1,\\[4pt]
\dfrac{1}{n+2}+\Big(\dfrac1{n+1}-\dfrac1{n+2}\Big)\dfrac{t-2^{-(n+1)}}{2^{-n}-2^{-(n+1)}}, & t\in\big(2^{-(n+1)},2^{-n}\big],\ n=N(t).
\end{cases}
\]
We claim that $\omega_{\text{dyadic}} $ in fact satisfies (H1)--(H3). \\
 
To begin, note that  $\omega_{\text{dyadic}} $ is continuous on $(0,+\infty)$.
 In fact,  on the interior of each interval $(2^{-(n+1)},2^{-n})$, $\omega_{\text{dyadic}} $ is affine in $t$, hence continuous. Now, by  construction we know t that  on the interval $ (2^{-(n+1)},2^{-n}]$ we have $\omega_{\text{dyadic}} (2^{-n})=\frac{1}{n+1}$, 
while the adjacent interval on the right, $(2^{-n},2^{-(n-1)}]$,
also gives the value $\frac{1}{n+1}$ at its left endpoint. Thus, we conclude that there are not discontinuities at $t=2^{-n}$. On the other hand, at $t=1$, we have  $\omega_{\text{dyadic}} (1)=1$, which agrees with the constant definition $\omega_{\text{dyadic}} (t)=1$ for $t\ge 1$.  \\ 

To continue, note that on each interval $I_n=(2^{-(n+1)},2^{-n}]$, the weight $\omega_{\text{dyadic}} $ is affine, with slope
$\frac{\frac{1}{n+1}-\frac{1}{n+2}}
{2^{-n}-2^{-(n+1)}}>0$
.
Hence $\omega_{\text{dyadic}} $ is strictly increasing on every $I_n$.
Now, considering that for each $n\ge 0$ we have
\[
\omega_{\text{dyadic}} (2^{-n})=\frac{1}{n+1}, 
\]
so consecutive intervals meet at the same value at every dyadic endpoint. 
Thus, we conclude that $\omega_{\text{dyadic}} $ is nondecreasing on $(0,1]$.
Since $\omega_{\text{dyadic}} (t)=1$ for $t\ge 1$ and $\omega_{\text{dyadic}} (1)=1$, it follows that $\omega_{\text{dyadic}} $ is nondecreasing on $(0,\infty)$. Thus, (H1) holds.\\

On the other hand, for $t\ge1$,  we know that $\omega_{\text{dyadic}} (t)=1$ by definition, and for $t\in(0,1)$, with $n=N(t)\ge0$, the monotonicity on the interval (proved above)  yields
\[
\omega_{\text{dyadic}} (t)\le\omega_{\text{dyadic}} (2^{-n})=\frac1{n+1}\le1.
\]
Hence $\omega_{\text{dyadic}} (t)\le1$ for every $t>0$, and (H2) holds with $C_\omega=1$.\\
 
 For proving (H3), we will use the following identity
\begin{equation}\label{eq:dyadic-count}
N(t)=\Big\lfloor \log_2\tfrac1t \Big\rfloor,\qquad t\in(0,1],
\end{equation}
which is a consequence of  \eqref{eq:dyadic-interval}  after applying $\log_2$ to both sides of the inequality. 
To continue, let  $0<t<\frac12$ and set $n:=N(t)$. 
Thus, considering that $t< 2^{-1}$ and the definition of $N(t)$ we get that  $n\ge1$.
Then, using the monotonicity of $\omega_{\text{dyadic}} $ on the interval with index $n$ (established in the proof of (H1)), and as $t>2^{-(n+1)}$, we can write 
\[
\omega_{\text{dyadic}} (t)\ \ge\ \lim_{s\to(2^{-(n+1)})^+}\omega_{\text{dyadic}} (s)=\frac1{n+2},
\]
and hence, we get  
\begin{equation}\label{eq:H3-a}
\omega_{\text{dyadic}} (t)^{-1}\le n+2.
\end{equation}
On the other hand, by \eqref{eq:dyadic-interval} we know that  $t\le2^{-n}$, therefore $1/t\ge2^n$
and we obtain 
\begin{equation}\label{eq:H3-b}
\log(1/t)\ge n\log2.
\end{equation}
Now, since the map $n\mapsto\frac{n+2}{n}=1+\frac2n$ is decreasing for $n\ge1$.
Hence its maximum is achieved at $n=1$ and is equals to $3$, which yields   $n+2\le3n$ for all $n\ge1$. 
Then, gathering this with \eqref{eq:H3-a} and \eqref{eq:H3-b}, we get
\[
\omega_{\text{dyadic}} (t)^{-1}\ \le\ n+2\ \le\ 3n\ \le\ \frac{3}{\log2}\,\log(1/t),
\]
for all $0<t<\frac12$. Thus, we conclude that  (H3) holds with $C=\frac3{\log2}$ and $t_0=\frac12$.

\begin{remark}
Note that, $\omega_{\text{dyadic}} (t)\to0$ as $t\to0^+$. In fact, since $N(t)\to+\infty$ as $t\to0^+$ and as 
\[
\omega_{\text{dyadic}} (t)\in\Big[\frac1{n+2},\frac1{n+1}\Big],\qquad n=N(t),
\]
and both extremes of the interval tend to $0$ as $n\to\infty$, the squeeze theorem yields the claim.
\end{remark}
}

%%%%%%%%%%%%%%% 
 \section{Proofs of Lemma \ref{lem.phiq}}\label{proof.main.lemma}

For $ t>0$ consider
\[
h(t):=t^{q-1}\omega(t)
.
\]
Since $\omega$ is continuous and positive on $(0, + \infty)$, we conclude that $h$ is continuous and positive on $(0, + \infty)$. On the other hand, as we assume that $\omega$ is bounded (H2), we get
\[
0<h(t)\le C_\omega t^{q-1},\qquad t>0.
\]
As a consequence, and since $q>1$, we have
$C_\omega t^{q-1}\longrightarrow 0$  as $t \to 0^+$, and we can write 
\[
h(t)\to 0
\qquad\text{as }t\to0^+.
\]
Thus, defining
$h(0):=0$, we conclude that $h\in C([0, + \infty))$.
To continue, note that, for every $T>0$, we have
\[
\phi_q(T)=
\int_0^T h(\xi)\,d\xi
\le
 C_\omega\int_0^T \xi^{q-1}\,d\xi
=
\frac{C_\omega}{q}T^q< + \infty
.
\]
Thus, we conclude that $\phi_q$ is well defined on $[0, + \infty)$.
On the other hand, considering that $h$ is continuous on $[0, + \infty)$,  with the help of the fundamental theorem of calculus we deduce that 
\[
\phi_q\in C^1([0, + \infty))
\]
and, for $t\ge0$ we have
\[
\phi_q'(t)=h(t)=t^{q-1}\omega(t)
.
\]
where at $t=0$ both sides of the identity are understood as $0$. A first consequence of this, we get
\[
\phi_q(0)=0,
\]
and for each $t>0$,
\[
\phi_q'(t)=t^{q-1}\omega(t)>0
.
\]
Note in particular that $\phi_q$ is strictly increasing on $[0, + \infty)$.
To continue, we will focus on proving convexity. Since $q>1$, the map $t\mapsto t^{q-1}$ is nondecreasing on $[0, + \infty)$, while $\omega$ is nondecreasing by (H1). Then, since both factors are nonnegative, their product
$h(t)=t^{q-1}\omega(t) $ is also nondecreasing. Thus, we conclude $\phi_q'=h$ is nondecreasing, and hence $\phi_q$ is convex on $[0, + \infty)$.\\

To continue, note that since $\omega$ is nondecreasing (by (H1)), for every $\xi\ge1$, we have $\omega(\xi)\ge\omega(1)>0. $ Therefore, for $t\ge1$, we can write
\[
\begin{aligned}
\phi_q(t)
&=\int_0^t \xi^{q-1}\omega(\xi)\,d\xi\\
&\ge \int_1^t \xi^{q-1}\omega(\xi)\,d\xi\\
&\ge \omega(1)\int_1^t \xi^{q-1}\,d\xi\\
&=\frac{\omega(1)}{q}(t^q-1).
\end{aligned}
\]
Since $q>1$, we have $
\frac{\omega(1)}{q}(t^q-1)\longrightarrow+\infty$ as $t \to + \infty$. Hence, we conclude
\[
\phi_q(t)\longrightarrow+\infty
\qquad\text{as }t\to\infty.
\]

Finally, consider $\tau>0$. Since $\omega$ is nondecreasing, we know that $\omega(\xi)\le\omega(\tau)$  for  $ 0<\xi\le\tau$. As a consequence, we can write
\[
\begin{aligned}
\phi_q(\tau)
&=
\int_0^\tau \xi^{q-1}\omega(\xi)\,d\xi\\
&
\le
\omega(\tau)\int_0^\tau \xi^{q-1}\,d\xi\\
&=
\frac{\tau^q\omega(\tau)}{q}
.
\end{aligned}
\]
For $\tau=0$, both sides of the expression above are equal to $0$. Thus, we get 
\[
\phi_q(\tau)\le\frac{\tau^q\omega(\tau)}{q},
\qquad \tau\ge0.
\]

It remains to prove the assertion concerning the inverse. We have shown that $\phi_q$ is continuous and strictly increasing on $[0,+ \infty)$, with $
\phi_q(0)=0$  and $ \displaystyle
\lim_{t\to\infty}\phi_q(t)=+\infty$. Therefore, we conclude that 
\[
\phi_q([0,+ \infty))=[0,+ \infty),
\]
and then the map $\phi_q:[0,+ \infty)\to[0,+ \infty) $ 
is a continuous strictly increasing bijection. Then, its inverse $\phi_q^{-1}:[0,+ \infty)\to[0,+ \infty)
$ is consequently continuous and strictly increasing. With this, we conclude 
\[
\phi_q^{-1}(0)=0.
\]
With this, we conclude the proof of the lemma.\\

\paragraph{\bf Acknowledgements} The second author warmly thanks  M\'onica Candia-Reyes for her valuable advice and insightful comments.
 
\paragraph{\bf Datasets} Data sharing does not apply to this article as no datasets were generated or analyzed during the
current study.
 
\paragraph{\bf  Conflict of interest} In addition, the author declares no conflict of interest and confirms being the only contributors to this paper.
 
%------------------
%------ end of document 
%------------------
%\newpage


\begin{thebibliography}{99}
%\addcontentsline{toc}{section}{References}
\bibitem{chae14}
{\sc D.~Chae}, {\em Liouville-type theorems for the forced {E}uler equations
  and the {N}avier-{S}tokes equations}, Comm. Math. Phys., 326 (2014),
  pp.~37--48.
%
\bibitem{ChaeWolf}
{\sc D.~Chae and J.~Wolf}, {\em On {L}iouville type theorems for the steady
  {N}avier-{S}tokes equations in {$\mathbb{R}^3$}}, J. Differential Equations,
  261 (2016), pp.~5541--5560.
%
\bibitem{galdi2011introduction}
{\sc G.~Galdi}, {\em An introduction to the mathematical theory of the
{N}avier-{S}tokes equations: Steady-state problems}, Springer Science \&
Business Media, 2011.
%
\bibitem{Kozonoetal}
{\sc H.~Kozono, Y.~Terasawa, and Y.~Wakasugi}, {\em A remark on
  {L}iouville-type theorems for the stationary {N}avier-{S}tokes equations in
  three space dimensions}, J. Funct. Anal., 272 (2017), pp.~804--818.
%
\bibitem{lemarie2016navier}
{\sc P.~G. Lemari{\'e}-Rieusset},
{\em The {N}avier-{S}tokes problem in the 21st century}, CRC press, 2016.
%
%
\bibitem{Lerner26}
{\sc N.~
Lerner},   {\em Wiener Algebras Methods for Liouville Theorems on the Stationary Navier-Stokes System}, arXiv preprint arXiv:2601.13916  (2026).
%
\bibitem{Ser2016}  
G. \textsc{Seregin}, \emph{A Liouville type theorem for steady-state Navier-Stokes equations}.  
J. {\'E}.D.P., Expos{\'e} no IX, (2016).
%
\bibitem{Seregin16}
{\sc G.~Seregin}, {\em Liouville type theorem for stationary {N}avier-{S}tokes
  equations}, Nonlinearity, 29 (2016), pp.~2191--2195.
%
\bibitem{Sereginetwang}
{\sc G.~Seregin and W.~Wang}, {\em Sufficient conditions on {L}iouville type
  theorems for the 3{D} steady {N}avier-{S}tokes equations}, Algebra i Analiz,
  31 (2019), pp.~269--278.
%
\bibitem{V2026}
 {\sc G. Vergara-Hermosilla}, \emph{Liouville theorems above the critical $9/2$ Threshold for stationary Navier-Stokes Equations}, arXiv preprint arXiv:2604.06527, 2026.
\end{thebibliography}
 \end{document}